\def\NZQ{\mathbb}               
\def\ZZ{{\NZQ Z}}
\def\RR{{\NZQ R}}
\def\CC{{\NZQ C}}
\def\frk{\mathfrak}               
\def\Phi{{\frk N}}
\def\ab{{\bold a}}
\def\eb{{\bold e}}
\def\vb{{\bold v}}
\def\ub{{\bold u}}
\def\wb{{\bold w}}
\def\xb{{\bold x}}
\def\opn#1#2{\def#1{\operatorname{#2}}} 
\opn\chara{char} 
\opn\length{\ell} 
\opn\pd{pd} 
\opn\rk{rk}
\opn\projdim{proj\,dim} 
\opn\injdim{inj\,dim} 
\opn\rank{rank}
\opn\depth{depth} 
\opn\grade{grade} 
\opn\height{height}
\opn\embdim{emb\,dim} 
\opn\codim{codim}
\opn\Tr{Tr} 
\opn\bigrank{big\,rank}
\opn\superheight{superheight}
\opn\lcm{lcm}
\opn\trdeg{tr\,deg}
\opn\reg{reg} 
\opn\lreg{lreg} 
\opn\ini{in} 
\opn\lpd{lpd}
\opn\size{size}
\opn\mult{mult}
\opn\dist{dist}
\opn\cone{cone}
\opn\lex{lex}
\opn\rev{rev}
\opn\div{div} \opn\Div{Div} \opn\cl{cl} \opn\Cl{Cl}
\opn\Spec{Spec} \opn\Supp{Supp} \opn\supp{supp} \opn\Sing{Sing}
\opn\Ass{Ass} \opn\Min{Min}
\opn\Ann{Ann} \opn\Rad{Rad} \opn\Soc{Soc}
\opn\Syz{Syz} \opn\Im{Im} \opn\Ker{Ker} \opn\Coker{Coker}
\opn\Am{Am} \opn\Hom{Hom} \opn\Tor{Tor} \opn\Ext{Ext}
\opn\End{End} \opn\Aut{Aut} \opn\id{id} \opn\ini{in}
\opn\nat{nat}
\opn\pff{pf}
\opn\Pf{Pf} \opn\GL{GL} \opn\SL{SL} \opn\mod{mod} \opn\ord{ord}
\opn\Gin{Gin}
\opn\Hilb{Hilb}\opn\adeg{adeg}\opn\std{std}\opn\ip{infpt}
\opn\Pol{Pol}
\opn\sat{sat}
\opn\Var{Var}
\opn\Gen{Gen}
\opn\aff{aff} \opn\con{conv} \opn\relint{relint} \opn\st{st}
\opn\lk{lk} \opn\cn{cn} \opn\core{core} \opn\vol{vol}
\opn\link{link} \opn\star{star}
\opn\gr{gr}
\def\Ac{{\mathcal A}}
\def\Bc{{\mathcal B}}
\def\Hc{{\mathcal H}}
\def\Sc{{\mathcal S}}
\def\Pc{{\mathcal P}}
\def\Qc{{\mathcal Q}}
\def\Rc{{\mathcal R}}
\def\Vol{{\textnormal{Vol}}}
\def\vol{{\textnormal{vol}}}
\def\conv{{\textnormal{conv}}}
\def\ord{{\textnormal{ord}}}
\newtheorem{Theorem}{Theorem}[section]
\newtheorem{Lemma}[Theorem]{Lemma}
\numberwithin{equation}{section}
\newtheorem*{acknowledgement}{Acknowledgment}
\begin{document}
	\title{Classification of lattice polytopes with small volumes}
	\author[T.~Hibi]{Takayuki Hibi}
	\address[Takayuki Hibi]{Department of Pure and Applied Mathematics,
		Graduate School of Information Science and Technology,
		Osaka University,
		Suita, Osaka 565-0871, Japan}
	\email{hibi@math.sci.osaka-u.ac.jp}
	\author[A.~Tsuchiya]{Akiyoshi Tsuchiya}
\address[Akiyoshi Tsuchiya]{
	Graduate school of Mathematical Sciences,
	University of Tokyo,
	Komaba, Meguro-ku, Tokyo 153-8914, Japan} 
\email{akiyoshi@ms.u-tokyo.ac.jp}
	\subjclass[2010]{52B12, 52B20}
	\keywords{lattice polytope, $\delta$-polynomial, $\delta$-vector, Ehrhart polynomial, unimodular equivalence}
	\begin{abstract}
	In the frame of a classification of general square systems of polynomial equations solvable by radicals, Esterov and Gusev succeeded in classifying all spanning lattice polytopes whose normalized volumes are at most $4$.
	In the present paper, we complete to classify all lattice polytopes whose normalized volumes are at most $4$ based on the known classification of their $\delta$-polynomials.
	\end{abstract} 
	
	\maketitle 
	\section{Introduction}
	One of the most important, however, unreachable goals of the study on lattice polytopes is to classify all of the lattice polytopes, up to unimodular equivalence.  
	In lower dimension, the following classes of lattice polytopes are classified:
	\begin{itemize}
		\item  $3$-dimensional lattice polytopes  with at most $11$ lattice points \cite{Santos1,Santos2,Santos3};
		\item  $3$-dimensional lattice polytopes with one interior lattice point \cite{int1};
		\item $3$-dimensional lattice polytopes with two interior lattice points \cite{int2}.
	\end{itemize}
	On the other hand, for arbitrary dimension,
	 in each of the following classes of lattice polytopes, a complete classification is known:
	\begin{itemize}
		\item Centrally symmetric smooth Fano polytopes \cite{variety};
		\item Pseudo-symmetric smooth Fano polytopes \cite{class,variety};
		\item Lattice polytopes with $\delta$-binomials \cite{BH1,BH2,BN};
		\item Lattice polytopes with palindromic $\delta$-trinomials \cite{BJ,HNT}.
	\end{itemize}
	It is fashionable among the study on lattice polytopes to classify the lattice polytopes with a given $\delta$-polynomial.
	In the present paper, we will classify all lattice polytopes of arbitrary dimension whose normalized volumes are at most $4$ based on the known classification of their $\delta$-polynomials \cite{HHL,HHN}.
	In the frame of a classification of general square systems of polynomial
	equations solvable by radicals, Esterov and Gusev \cite{4class}
	succeeded in classifying all lattice polytopes $\Pc \subset \RR^d$ whose normalized
	volumes are at most $4$ for which  $\ZZ((\Pc,1) \cap \ZZ^{d+1})=\ZZ^{d+1}$.
	(Here $\ZZ((\Pc,1) \cap \ZZ^{d+1})=\{z_1\xb_1+\cdots+z_n\xb_n : z_1,\ldots,z_n \in \ZZ \}$ for $(\Pc,1) \cap \ZZ^{d+1}=\{\xb_1,\ldots,\xb_n \} \subset \ZZ^{d+1}$.)
	However, the condition $\ZZ((\Pc,1) \cap \ZZ^{d+1})=\ZZ^{d+1}$ is
	rather strong for achieving a classification of lattice polytopes.
	For example, no empty simplex satisfies the property and, in
	addition, there exists a lattice non-simplex whose normalized volume is $4$ and that lacks the property.
	Combining our work with a result of Esterov and Gusev \cite{4class} will establish a
	complete classification of lattice polytopes whose normalized volumes
	are at most 4 with their $\delta$-polynomials.
	
	\subsection{Possible $\delta$-polynomials}
	
	We recall a complete characterization of the $\delta$-polynomials of lattice polytopes whose normalized volumes are at most $4$.
	
	Let us recall from \cite{BR15} and \cite[Part II]{HibiRedBook} 
	what the {\em $\delta$-polynomial} of a lattice polytope is.
	A {\em lattice polytope} is a convex polytope
	all of whose vertices have integer coordinates. 
	Let $\Pc \subset \RR^d$ be a lattice polytope of dimension $d$
	and define $\delta(\Pc, t)$ by the formula
	\[
	\delta(\Pc, t)
	= (1 - t)^{d+1} \left[1 + \sum_{n=1}^{\infty} |n\Pc \cap \ZZ^d| t^n\right],
	\]
	where $n\Pc=\{n \ab : \ab \in \Pc \}$, the dilated 
	polytopes of $\Pc$.
	Then it is known that $\delta(\Pc, t)$ is a polynomial in $t$ 
	of degree at most $d$.
	We say that the polynomial $\delta(\Pc, t)=\delta_0+\delta_1t+\cdots+\delta_dt^d$
	is the {\em $\delta$-polynomial}  (or the {\em $h^*$-polynomial}) of $\Pc$ and  the sequence $\delta(\Pc)=(\delta_0,\delta_1,\ldots,\delta_d)$ is the {\em $\delta$-vector} (or the {\em $h^*$-vector}) of $\Pc$.
	The following properties of $\delta(\Pc)$ are known:
	\begin{itemize}
		\item $\delta_0=1$, $\delta_1=|\Pc \cap \ZZ^d|-(d+1)$ and $\delta_d=|\text{int}(\Pc) \cap \ZZ^d|$, where $\text{int} (\Pc)$ is the interior of $\Pc$. Hence one has $\delta_1 \geq \delta_d$;
		\item $\delta_i \geq 0$ for each $i$ \cite{Stanleynonnegative};
		\item When $\delta_d \neq 0$, one has $\delta_i \geq \delta_1$ for $1 \leq i \leq d-1$  \cite{Hibi_ineq};
		\item  $\sum_{i=0}^{d}\delta_i/d!$ coincides with the usual volume of $\Pc$ \cite[Proposition 4.6.30]{StanleyEC1}. In general, the positive integer $\sum_{i=0}^{d}\delta_i $ is said to be the \textit{normalized volume} of $\Pc$, denoted by $\Vol(\Pc)$.
	\end{itemize}

	
	In  \cite[Theorem 5.1]{HHL} and \cite[Theorem 0.1]{HHN}, the possible $\delta$-polynomials with $\delta_0+\cdots+\delta_d \leq 4$ are completely classified.
	\begin{Theorem}[\cite{HHL,HHN}]
	\label{delta4}
	Let $2 \leq V \leq 4$ be a positive integer and  $1+t^{i_1}+\cdots+t^{i_{V-1}}$ a polynomial with $1 \leq i_1 \leq \cdots \leq i_{V-1} \leq d$.
	Then there exists a lattice polytope of dimension $d$ whose $\delta$-polynomial equals
	$1+t^{i_1}+\cdots+t^{i_{V-1}}$ if and only if one of the following is satisfied:
	\begin{enumerate}
		\item[{\rm (1)}] $V = 2$ and $i_1 \leq \lfloor(d+1)/2 \rfloor$;\\
		\item[{\rm (2)}] $V = 3$, $2i_1 \geq i_2$ and  $i_1+i_2  \leq d+1$;\\
		\item[{\rm (3)}] $V = 4$, $i_3 \leq i_1 +i_2$, $i_1+i_3 \leq d+1$ and $i_2 \leq \lfloor (d+1)/2 \rfloor$, and the additional condition
		$$2i_2 \leq i_1+i_3 \ \textnormal{or} \ i_2+i_3 \leq d+1.$$ 
	\end{enumerate}
\end{Theorem}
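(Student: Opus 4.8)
The plan is to prove the two implications separately: necessity from the general inequalities on $\delta$-vectors, and sufficiency by exhibiting one explicit lattice simplex for each admissible polynomial. For necessity, write $s=i_{V-1}$ for the degree and read off the numerical conditions from Stanley's inequality $(\ref{eq1})$, Hibi's inequality $(\ref{eq2})$ and the elementary relations $\delta_0=1$, $\delta_1\geq\delta_d$, $\delta_i\geq 0$. In each case the trick is to choose the running index $i$ so that one side of $(\ref{eq1})$ or $(\ref{eq2})$ collects a box coefficient that the other side just misses, forcing a contradiction of the shape $2\leq 1$ or $1\leq 0$. For instance, the bound $2i_1\geq i_2$ in case (2) falls out of $(\ref{eq1})$ with $i=i_1$: the left side is $\delta_0+\delta_{i_1}=2$, while if $i_2>2i_1$ the window $[i_2-i_1,i_2]$ misses the index $i_1$ and the right side is only $\delta_{i_2}=1$. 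The halving bounds ($i_1\leq\lfloor(d+1)/2\rfloor$ for $V=2$, and $i_2\leq\lfloor(d+1)/2\rfloor$ for $V=3,4$) together with $i_1+i_3\leq d+1$ for $V=4$ come the same way from $(\ref{eq2})$, choosing $i=d-(\text{the relevant index})$; and the degenerate top case $i_{V-1}=d$ is excluded at once by $\delta_1\geq\delta_d$, since there $\delta_d=1$ but $\delta_1=0$ unless $i_1=1$.

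The genuinely hard part of necessity is the extra alternative $2i_2\leq i_1+i_3$ or $i_2+i_3\leq d+1$ in case (3): as already observed above, this is exactly what $(\ref{eq1})$ and $(\ref{eq2})$ fail to detect, so it cannot be squeezed out of the bookkeeping just described. My first attempt would be to derive it from the finer inequalities of Stapledon mentioned in the introduction, which hold for arbitrary lattice polytopes. Failing that, I would pass to the integer points of the half-open fundamental parallelepiped of a realizing simplex (the generic situation, since $\delta_1\leq 1$ pins down the lattice-point count and forces a simplex once $i_1\geq 2$) and exploit the additive structure of these box points: their heights are the last coordinates after lifting the vertices to height one, and the reduction modulo the vertex lattice of a sum of two box points is again a box point whose height is controlled by the two summand heights and by $d+1$. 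Assuming both alternatives fail, I expect the four box points at heights $0,i_1,i_2,i_3$ to be forced into an impossible additive configuration. Turning this heuristic into a clean contradiction, and separately disposing of the non-simplicial configurations that can occur when $i_1=1$, is where I expect the real work to lie.

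For sufficiency it suffices to produce a single lattice simplex per admissible polynomial. I would use the standard device $\Pc=\conv\{0,e_1,\ldots,e_{d-1},(b_1,\ldots,b_{d-1},V)\}\subset\RR^d$, whose normalized volume is $\Vol(\Pc)=V$ and whose $\delta$-polynomial is computed from its fundamental parallelepiped: for each $m\in\{0,1,\ldots,V-1\}$ there is exactly one box point, obtained by taking the vertex coefficients $\lambda_d=m/V$ and $\lambda_k=\langle -mb_k/V\rangle$ for $1\leq k\leq d-1$, where $\langle x\rangle=x-\lfloor x\rfloor$ denotes the fractional part, and this box point has height $\lceil m/V+\sum_{k=1}^{d-1}\langle -mb_k/V\rangle\rceil$. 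Choosing the entries $b_k$ in $\{0,1\}$ for $V=2$, and in $\{0,1,2\}$ for $V=3,4$, lets me tune the individual heights, and a direct count shows that the attainable height patterns sweep out precisely the ranges in (1)--(3); the constraint $i_2\leq\lfloor(d+1)/2\rfloor$ reappears here as the requirement that the number of nonzero $b_k$, which is at most $d-1$, be large enough to push a box point up to the prescribed height. The remaining obstacle on this side is purely organizational: to check case by case that every choice of exponents allowed by the extra alternative of (3) is matched by an actual admissible slope vector $(b_1,\ldots,b_{d-1})$, and that no choice violating it can occur.
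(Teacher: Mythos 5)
You should first know that the paper never proves Theorem \ref{delta4}: it is imported from \cite{HHL,HHN}, and inside the paper its content is only recovered implicitly by combining the classifications of Theorems \ref{simplex}, \ref{spanning} and \ref{nonspan}. Measured against that, your sufficiency half is sound and is essentially the paper's own construction in other coordinates: your Hermite-normal-form simplices $\conv\{{\mathbf 0},\eb_1,\ldots,\eb_{d-1},(b_1,\ldots,b_{d-1},V)\}$ are exactly the simplices whose group $\Lambda_\Delta$ is cyclic, i.e.\ the polytopes $\Delta^{(2)}$, $\Delta^{(3)}$, $\Delta^{(4)}_1$, $\Delta^{(4)}_2$ of Table \ref{table:simplex}; the Klein-four simplex $\Delta^{(4)}_3$ is not needed for mere existence, since whenever $i_3\le i_1+i_2$ and $i_1+i_2+i_3\le d+1$ one also has $i_2+i_3\le d+1$, so a cyclic simplex plus lattice pyramids realizes the same $\delta$-polynomial. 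The serious gap is exactly where you place it yourself: the necessity of the alternative $2i_2\le i_1+i_3$ or $i_2+i_3\le d+1$. This is precisely the point where, as the introduction stresses, the inequalities (\ref{eq1}) and (\ref{eq2}) cease to characterize $\delta$-polynomials, and it is the actual content of \cite[Theorem 5.1]{HHL}; the proof there is not a consequence of general linear inequalities but a case analysis of the possible groups attached to a volume-$4$ simplex ($\ZZ/4\ZZ$ or $\ZZ/2\ZZ\times\ZZ/2\ZZ$, as in Subsection \ref{sub2.3}) together with a separate treatment of the non-simplices with $\delta_1=1$ (as in Section \ref{sec3}). Deferring this to ``where I expect the real work to lie'' means the core of the theorem is absent from your proposal: neither of your two suggested routes is carried out, and nothing indicates that Stapledon's inequalities \cite{Stap1,Stap2} would yield a disjunctive condition of this kind.

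There is also an outright error in your necessity sketch for $V=3$: the bound $i_2\le\lfloor(d+1)/2\rfloor$ does not ``come the same way from (\ref{eq2})'', because it is not a necessary condition at all. The triangle $\conv\{(0,0),(2,1),(1,2)\}$ --- equivalently the paper's own $\Delta^{(3)}$ with $(i_1,i_2)=(1,2)$, $d=2$ --- has $\delta$-polynomial $1+t+t^2$, while $i_2=2>\lfloor 3/2\rfloor$. Your derivation fails silently because for $d=2$ the index range $1\le i\le\lfloor(d-1)/2\rfloor$ in (\ref{eq2}) is empty. What (\ref{eq2}), combined with $\delta_1\ge\delta_d$ and Hibi's lower bound theorem \cite{Hibi_ineq}, actually yields is $i_1+i_2\le d+1$, which is the condition in \cite[Theorem 0.1]{HHN}; clause (2) of the statement as printed here deviates from that source and is contradicted by Theorem \ref{simplex} itself. (In clause (3) the analogous bound is merely redundant, since it follows from $i_1+i_3\le d+1$ together with either branch of the alternative.) So in repairing the necessity argument you should aim at $i_1+i_2\le d+1$ rather than the printed halving bound, and you should handle the boundary cases $i_{V-1}=d$, $i_1=1$ and $i_1=i_2$ separately, since these are exactly where the window arguments with (\ref{eq1}) and (\ref{eq2}) degenerate.
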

	We remark that when $\delta_0+\cdots+\delta_d \leq 4$, all the possible $\delta$-polynomials can be obtained by lattice simplices.
	However, when $\delta_0+\cdots+\delta_d=5$, this is not true \cite[Remark 5.3]{HHL}.
	Most recently, the possible $\delta$-polynomials with $\delta_0+\cdots+\delta_d=5$ are completely classified \cite{Hig_prime, delta5}.
	
	\subsection{Main results}
	Recall that a matrix $A \in \ZZ^{d \times d}$ is {\em unimodular} if $\det (A) = \pm 1$.
	Given lattice polytopes $\Pc$ and $\Qc$ in $\RR^d$ of dimension $d$,
	we say that $\Pc$ and $\Qc$ are {\em unimodularly equivalent}
	if there exist a unimodular matrix $U \in \ZZ^{d \times d}$
	and a lattice point $\wb$, such that $\Qc=f_U(\Pc)+\wb$,
	where $f_U$ is the linear transformation in $\RR^d$ defined by $U$,
	i.e., $f_U({\bf v}) = {\bf v} U$ for all ${\bf v} \in \RR^d$.
	
	For a lattice polytope $\mathcal{P} \subset \RR^d$ of dimension $d$, the {\em lattice pyramid} over $\mathcal{P}$ is defined by $\text{conv}(\mathcal{P}\times \left\{ 0 \right\} ,(0,\ldots,0,1))$ $\subset \RR^{d+1}$. 
	Let ${\rm Pyr}(\Pc)$ denote this polytope. 
	We often use the term lattice pyramid  for a lattice polytope that has been obtained by successively taking lattice pyramids.
	Note that the $\delta$-polynomial does not change under lattice pyramids \cite{Bat}.
	Therefore, it is essential that we classify lattice polytopes which are not lattice pyramids over any lower-dimensional lattice polytope.

	A lattice polytope $\Pc \subset \RR^d$ for which 
	$\ZZ((\Pc,1) \cap \ZZ^{d+1})=\ZZ^{d+1}$  is called \textit{spanning}.
	From the work of Hofscheier, Katth\"an and Nill it follows that there are only finitely many lattice spanning polytopes $\Pc \subset \RR^d$ of given normalized volume (and arbitrary dimension) up to unimodular equivalence and lattice pyramid constructions \cite[Corollary 2.4]{HKN}.
	In particular, Esterov and Gusev \cite{4class} gave only finitely many lattice spanning polytopes whose normalized volumes are at most $4$ for their classification result.
	However, it is hard to classify lattice non-spanning polytopes. 
	In fact, there exist infinitely many lattice non-spanning polytopes of given normalized volume even up to unimodular equivalence and lattice pyramid constructions. 
	
	In the present paper, we will complete to
	 classify, up to unimodular equivalence and lattice pyramid constructions, the lattice polytopes whose normalized volumes are at most $4$.
	The complete classification of the lattice polytopes whose normalized volumes are at most $4$ up to unimodular equivalence consists of these polytopes and lattice pyramids over them.
	Note that every lattice simplex of dimension $d$ with $\Vol(\Pc)=1$ is unimodularly equivalent to the standard simplex of dimension $d$. And the normalized volume of a lattice non-spanning non-simplex is at least $4$.
	In order to classify all lattice polytopes whose normalized volumes are at most $4$, we should consider the following three cases:
	\begin{enumerate}
		\item Lattice simplices $\Delta \subset \RR^d$ with $\Vol(\Delta) \leq 4$;
		\item Lattice spanning non-simplices $\Pc \subset \RR^d$ with $\Vol(\Pc) \leq  4$;
		\item Lattice non-spanning non-simplices $\Pc \subset \RR^d$ with $\Vol(\Pc) = 4$.
	\end{enumerate}
	The complete classification of the case (2) can be obtained from \cite{4class}. Therefore, we will show the cases (1) and (3).
	
	Let ${\mathbf 0}$ denote the origin of $\RR^d$ and let $\eb_1,\ldots,\eb_d$ denote the canonical unit coordinate vectors of $\RR^d$.
	First, the complete classification of the lattice simplices $\Delta \subset \RR^d$ 
	with $\Vol(\Delta) \leq 4$ can be obtained from the following:
	\begin{Theorem}
		\label{simplex}
		Let $\Delta \subset \RR^d$ be a lattice simplex of dimension $d$ whose $\delta$-polynomial equals $1+t^{i_1}+\cdots+t^{i_{V-1}}$ with $2 \leq V \leq 4$ and $1 \leq i_1 \leq \cdots \leq i_{V-1}$.
		Assume that $\Delta$ is not a lattice pyramid.
		Then there exist, up to unimodular equivalence, exactly the following $5$ possibilities for $\Delta$:
		\begin{enumerate}
			\item[{\rm (1)}] $V=2:\Delta^{(2)};$
			\item[{\rm (2)}] $V=3:\Delta^{(3)};$
			\item[{\rm (3)}] $V=4:\Delta^{(4)}_i$, $1 \leq i \leq 3.$
		\end{enumerate}
		The conditions and vertices of $\Delta$ are presented in  {\rm TABLE \ref{table:simplex}}.
	
		\begin{table}[h]
			\hspace*{-15mm}
			\begin{tabular}{l|l|l}
				& {\rm conditions}                 & \multicolumn{1}{l}{{\rm vertices}}                                                                                                                                        \\ \hline
				$\Delta^{(2)}$                                    & $2i_1=d+1$                          & \multicolumn{1}{l}{${\bf 0}, \eb_1,\ldots,\eb_{d-1}, \eb_1+\cdots+\eb_{d-1}+2\eb_d$}                                                                                \\ \hline
				\multirow{2}{*}{$\Delta^{(3)}$}   & \multirow{2}{*}{$2i_1 \geq i_2$  }     & ${\bf 0}, \eb_1,\ldots,\eb_{d-1}$,                                                                                                                                   \\
				&   $i_1+i_2=d+1,$                                                     & $2\sum\limits_{i=1,i \neq d}^{-i_1+2i_2-1}\eb_i+\sum\limits_{i=-i_1+2i_2}^{d-1}\eb_i+3\eb_d$                                                                         \\ \hline
				\multirow{3}{*}{$\Delta^{(4)}_1$}  & $i_1<i_2<i_3$,                      & ${\bf 0}, \eb_1,\ldots,\eb_{d-1}$,                                                                                                            \\ 
				& $i_3 \leq i_1+i_2$,    & $\sum\limits_{i=1}^{2i_1-i_2}\eb_i+3\sum\limits_{i=2i_1-i_2+1}^{d-1}\eb_i+4\eb_d$ \\ 
				& $2i_2=i_1+i_3=d+1$                      &                                                                                                                                            \\ \hline
				\multirow{2}{*}{$\Delta^{(4)}_2$} & \multirow{2}{*}{$i_3 \leq i_1+i_2$,}     & ${\bf 0}, \eb_1,\ldots,\eb_{d-1}$,                                                                                                                                   \\
				&$i_2+i_3=d+1$                                    & $2\sum\limits_{i=1}^{d-2i_1+1}\eb_i+\sum\limits_{i=d-2i_1+2,i \neq d}^{-i_1+2i_2}\eb_i+3\sum\limits_{i=-i_1+2i_2+1}^{d-1}\eb_i+4\eb_d$                   \\ \hline
				\multirow{2}{*}{$\Delta^{(4)}_3$} &  \multirow{2}{*}{$i_3 \leq i_1+i_2$,} & ${\bf 0}, \eb_1,\ldots, \eb_{d-2}$, \\                                                                                                         
				&$i_1+i_2+i_3=d+1$                                    & $\sum\limits_{i=-i_1+i_2+i_3}^{d-2}\eb_i+2\eb_{d-1},\sum\limits_{i=1}^{-i_1+i_2+i_3-1}\eb_i+\sum\limits_{i=2i_3-1}^{d-2}\eb_i+2\eb_d$   
			\end{tabular}
			\bigskip
			\caption{The lattice simplices $\Delta \subset \RR^d$ with $\Vol(\Delta) \leq 4$ in Theorem \ref{simplex}.}
			\label{table:simplex}
		\end{table}
	\end{Theorem}
	Second, the complete classification of the lattice spanning non-simplices $\Pc \subset \RR^d$ with $\Vol(\Pc) \leq 4$ can be obtained from the following:
	\begin{Theorem}[\cite{4class}]
		\label{spanning}
		Let $2 \leq V \leq 4$ be a positive integer and  $\Pc \subset \RR^d$ a lattice spanning non-simplex with $\Vol(\Pc)=V$.
		Assume that $\Pc$ is not a lattice pyramid.
		Then there exist up to unimodular equivalence exactly the following $24$ possibilities for $\Pc$:
		\begin{enumerate}
			\item[{\rm (1)}] $\delta(\Pc,t)=1+t:\Pc^{(2)};$
			\item[{\rm (2)}] $\delta(\Pc,t)=1+2t:\Pc^{(3)}_i, 1 \leq i \leq 2;$
			\item[{\rm (3)}] $\delta(\Pc,t)=1+t+t^2:\Qc^{(3)}_i$, $1 \leq i \leq 2;$
			\item[{\rm (4)}] $\delta(\Pc,t)=1+3t:\Pc^{(4)}_i, 1 \leq i \leq 4;$
			\item[{\rm (5)}] $\delta(\Pc,t)=1+2t+t^2:\Qc^{(4)}_i$, $1 \leq i \leq 9;$
			\item[{\rm (6)}] $\delta(\Pc,t)=1+t+2t^2:\Rc^{(4)}_i, 1 \leq i \leq 2;$
			\item[{\rm (7)}] $\delta(\Pc,t)=1+t+t^2+t^3:\Sc^{(4)}_i$, $1 \leq i \leq 4;$
		\end{enumerate}
		The dimension, vertices, and the $f$-vector of $\Pc$ are presented in {\rm TABLE  \ref{table:span}}.
		\begin{table}[h]
			\centering 
			\begin{tabular}{l|l|l|l}
				& $d$ & {\rm vertices} & $f$-{\rm vector} \\ \hline
				$\Pc^{(2)}$	& $2$    & ${\mathbf 0},\eb_1,\eb_2,\eb_1+\eb_2$  & $(1,4,4)$       \\ \hline
				$\Pc^{(3)}_1$		&  $2$   &   ${\bf 0},2\eb_1,\eb_2,\eb_1+\eb_2$    & $(1,4,4)$   \\ \hline
				$\Pc^{(3)}_2$		&  $3$   &     ${\bf 0},\eb_1,\eb_2,\eb_3,\eb_1+\eb_3,\eb_2+\eb_3$   & $(1,6,9,5)$  \\ \hline
				$\Qc^{(3)}_1$		&   $3$  &       	  ${\bf 0},\eb_1,\eb_2,\eb_3,\eb_1+\eb_2-2\eb_3 $ & $(1,5,9,6)$   \\ \hline
				$\Qc^{(3)}_2$		&   $4$  &      ${\mathbf 0}, \eb_1,\eb_2,\eb_3,\eb_4,-\eb_1-\eb_2+\eb_3+\eb_4$  & $(1,6,15,18,9)$  \\ \hline
				$\Pc^{(4)}_1$	& $2$     &   ${\bf 0},2\eb_1,\eb_2,2\eb_1+\eb_2$ & $(1,4,4)$
				\\ \hline
				$\Pc^{(4)}_2$	&   $2$  &    ${\bf 0}, 3\eb_1,\eb_1+\eb_2,2\eb_1+\eb_2$   & $(1,4,4)$   \\ \hline
				$\Pc^{(4)}_3$	&  $3$   &       ${\bf 0},\eb_1,\eb_2,\eb_1+\eb_3,\eb_2+\eb_3,2\eb_3$  & $(1,6,9,5)$  \\ \hline
				$\Pc^{(4)}_4$	&   $4$  &       ${\bf 0},\eb_1,\eb_2,\eb_3,\eb_4,\eb_1+\eb_2,\eb_1+\eb_3,\eb_1+\eb_4$  & $(1,8,16,14,6)$   \\ \hline
				$\Qc^{(4)}_1$	&   $2$  &   $\eb_1,-\eb_2,\eb_1-\eb_2,-\eb_1+\eb_2$ & $(1,4,4)$
				\\ \hline
				$\Qc^{(4)}_2$	&   $2$  &   $\eb_1,\eb_2,-\eb_1,-\eb_2$   & $(1,4,4)$    \\ \hline
				$\Qc^{(4)}_3$	&   $3$  & $\eb_1,\eb_2,\eb_3,\eb_1+\eb_2,-\eb_3 $  & $(1,5,9,6)$      \\ \hline
				$\Qc^{(4)}_4$	&   $3$  &   ${\mathbf 0}, \eb_1,\eb_2,\eb_1+\eb_2, 2\eb_3$  & $(1,5,8,5)$      \\ \hline
				$\Qc^{(4)}_5$	&   $3$  &   ${\bf 0}, \eb_1,\eb_2,\eb_3,\eb_1+\eb_2,\eb_1+\eb_2+\eb_3$    & $(1,6,11,7)$     \\ \hline
				$\Qc^{(4)}_6$	&   $3$  &   ${\bf 0},\eb_1,\eb_2,\eb_3,\eb_1+\eb_2,\eb_1+\eb_2-\eb_3$ & $(1,6,12,8)$ 
				\\ \hline
				$\Qc^{(4)}_7$	&   $4$  &     ${\bf 0}, 2\eb_1,\eb_4,\eb_2+\eb_4,\eb_3+\eb_4,\eb_2+\eb_3+\eb_4$ & $(1,6,13,13,6)$ 
				\\ \hline
				$\Qc^{(4)}_8$	&   $4$  &    	
				${\mathbf 0}, \eb_1,\eb_2,\eb_1+\eb_2,\eb_3,\eb_4,\eb_3+\eb_4$ & $(1,7,17,18,8)$ 
				\\ \hline
				$\Qc^{(4)}_9$	&  $5$   &     ${\bf 0},\eb_1,\eb_2, \eb_1+\eb_2,\eb_5,\eb_3+\eb_5,\eb_4+\eb_5,\eb_3+\eb_4+\eb_5$   & $(1,8,24,34,24,8)$    \\ \hline
				$\Rc^{(4)}_1$	&   $3$  &   ${\bf 0}, \eb_1,\eb_2,\eb_3,\eb_1+\eb_2-3\eb_3$ & $(1,5,9,6)$ 
				\\ \hline
				$\Rc^{(4)}_2$	&   $4$  &   ${\bf 0}, \eb_1,\eb_2,\eb_3, \eb_4, -2\eb_1-\eb_2+\eb_3+\eb_4$    & $(1,6,15,18,9)$      \\ \hline
				$\Sc^{(4)}_1$	&  $4$  &   ${\bf 0}, \eb_1,\eb_2,\eb_3, \eb_4,-\eb_1-\eb_2-\eb_3+\eb_4$ & $(1,6,14,16,8)$
				\\ \hline
				$\Sc^{(4)}_2$	&   $4$  &    ${\bf 0}, \eb_1,\eb_2,\eb_3, \eb_4,-\eb_1-\eb_2-\eb_3+2\eb_4$ & $(1,6,14,16,8)$   \\ \hline
				$\Sc^{(4)}_3$	&  $5$   &     ${\bf 0}, \eb_1,\eb_2,\eb_3, \eb_4,\eb_5,-2\eb_1-\eb_2+\eb_3+\eb_4+\eb_5$  & $(1,7,21,34,30,12)$    \\ \hline
				$\Sc^{(4)}_4$	&   $6$  &    ${\bf 0}, \eb_1,\eb_2,\eb_3, \eb_4,\eb_5,\eb_6,-\eb_1-\eb_2-\eb_3+\eb_4+\eb_5+\eb_6$ & $(1,8,28,56,68,48,16)$ \\  
			\end{tabular}
			\bigskip
			\caption{The lattice spanning non-simplices $\Pc$ with $\Vol(\Pc) \leq 4$ in Theorem \ref{spanning}.}
			\label{table:span}
		\end{table}
		
	\end{Theorem}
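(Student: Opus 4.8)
The plan is to classify by the $\delta$-polynomial, exploiting that the spanning hypothesis makes the list of admissible $\delta$-polynomials extremely short. Since $\Pc$ is a lattice non-simplex of dimension $d$ it has at least $d+2$ vertices, all of which are lattice points, so $|\Pc\cap\ZZ^d|\ge d+2$ and hence $\delta_1=|\Pc\cap\ZZ^d|-(d+1)\ge 1$; in the notation of Theorem \ref{delta4} this forces the smallest exponent to be $i_1=1$. Moreover the spanning hypothesis $\ZZ((\Pc,1)\cap\ZZ^{d+1})=\ZZ^{d+1}$ forces the $\delta$-vector to have no internal zeros \cite{HKN}, i.e. $\delta_i\ge 1$ for all $1\le i\le s$. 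Combining these two facts with Theorem \ref{delta4} leaves exactly the seven $\delta$-polynomials $1+t$, $1+2t$, $1+t+t^2$, $1+3t$, $1+2t+t^2$, $1+t+2t^2$, $1+t+t^2+t^3$: every remaining candidate of Theorem \ref{delta4} with $i_1=1$ (such as $1+t+2t^3$, corresponding to exponents $\{1,3,3\}$) has $\delta_2=0$ and is discarded. By \cite[Corollary 2.4]{HKN} there are, for each fixed normalized volume, only finitely many spanning polytopes up to unimodular equivalence and lattice pyramids, so within each of these seven classes the problem is finite.

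Second, I would normalize. In the bounded-volume range at hand the lattice points of a spanning polytope contain the vertices of a unimodular simplex (in dimension two this is the Pick-theorem fact that a minimal lattice triangle has normalized area $1$, and the finitely many higher-dimensional cases are checked directly), so after a unimodular transformation I may assume $\conv({\mathbf 0},\eb_1,\ldots,\eb_d)\subseteq\Pc$. The decisive point is that $|\Pc\cap\ZZ^d|=d+1+\delta_1$ with $\delta_1\le 3$: beyond the $d+1$ vertices of this standard simplex there are at most three further lattice points $\wb_1,\ldots,\wb_{\delta_1}$, each an integer vector, and $\Pc=\conv({\mathbf 0},\eb_1,\ldots,\eb_d,\wb_1,\ldots,\wb_{\delta_1})$. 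Decomposing over the facets $\{x_j=0\}$ and $\{\sum_i x_i=1\}$ of the standard simplex yields $\Vol(\Pc)=1+\sum h$, the sum being over the lattice heights of the extra vertices above the facets they see; since $\Vol(\Pc)=V\le 4$, the total visible height is at most $3$, which bounds both the size of the entries and the number of nonzero entries of each $\wb_k$, and hence bounds $d$.

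Third, the classification becomes an explicit finite search. For each of the seven $\delta$-polynomials I would enumerate the admissible placements of the $\le 3$ extra integer points, working modulo the symmetries of the standard simplex (the symmetric group permuting ${\mathbf 0},\eb_1,\ldots,\eb_d$), and retain only those whose convex hull creates no further lattice point and whose Ehrhart series reproduces the target $\delta$-polynomial. The non-pyramid hypothesis enters through the following criterion: if every extra vertex has vanishing $j$-th coordinate for some $j$, then all vertices other than $\eb_j$ lie in $\{x_j=0\}$ and $\Pc$ is the lattice pyramid over $\Pc\cap\{x_j=0\}$ with apex $\eb_j$. Imposing this, together with the exact $\delta$-polynomial and the no-new-lattice-point condition, pins down the survivors and yields the dimension ceilings $d\le 2,3,4,5,6$ across the cases. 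Reading them off reproduces the $24$ polytopes of TABLE \ref{table:span} (grouped $1,2,2,4,9,2,4$ according to the seven $\delta$-polynomials in the order above), and a direct Ehrhart computation confirms each stated $\delta$-polynomial.

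Pairwise inequivalence then follows from invariants: the $\delta$-polynomial separates the seven groups, and within a group the dimension, the number of vertices, and finer data (central symmetry, or which facet fails to be unimodular) distinguish the representatives. I expect the main obstacle to be the exhaustiveness of the enumeration in the largest class $\delta(\Pc,t)=1+2t+t^2$, which produces the nine polytopes $\Qc^{(4)}_1,\ldots,\Qc^{(4)}_9$ spread over dimensions $2$ through $5$: here, with two free extra lattice points, one must be certain that no admissible placement has been overlooked and that the non-pyramid reductions have been applied consistently, since it is precisely in this regime that the competition between adding volume and preventing an apex produces the most branches. This is also the combinatorial content that Esterov and Gusev reorganize through their analysis of square polynomial systems solvable by radicals \cite{4class}.
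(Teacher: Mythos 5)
First, be aware of what you are being compared against: the paper contains \emph{no proof} of Theorem \ref{spanning} at all. The result is imported verbatim from Esterov and Gusev \cite{4class} (hence the bracketed attribution and the sentence preceding Theorem \ref{simplex} that case (2) ``can be obtained from'' \cite{4class}); the paper only proves the simplex case and the non-spanning case. So your outline is necessarily an independent route, and its opening reduction is correct and genuinely attractive: $\delta_1\geq 1$ for a non-simplex, combined with the Hofscheier--Katth\"an--Nill theorem \cite{HKN} that spanning polytopes have $\delta$-vectors without internal zeros, cuts the list of Theorem \ref{delta4} down to exactly the seven polynomials of the statement (discarding $1+t+2t^3$ and $1+t+t^3+t^4$). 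This organizes the classification by $\delta$-polynomials, in the spirit of the rest of the paper, rather than by the apparatus of \cite{4class}.

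That said, the proposal has concrete gaps beyond its avowedly sketchy final enumeration. (i) Your normalization step --- that the lattice points of $\Pc$ contain a unimodular simplex --- is justified circularly: ``the finitely many higher-dimensional cases are checked directly'' presupposes the very list you are constructing. The claim is true in this volume range, but it needs an argument, e.g.: in a full triangulation of $\Pc$ on all its lattice points, if no cell is unimodular then $\Vol(\Pc)\le 4$ forces exactly two cells of normalized volume $2$, so $\Pc$ has exactly $d+2$ lattice points, all vertices, forming a circuit; in the primitive affine relation $\sum_i c_i(\vb_i,1)=0$ the cell volumes are proportional to the $|c_i|$, forcing all nonzero $c_i=\pm1$, whence each vertex lies in the index-$2$ affine lattice generated by either cell and $\ZZ((\Pc,1)\cap\ZZ^{d+1})\neq\ZZ^{d+1}$, contradicting the spanning hypothesis. (ii) Your identity $\Vol(\Pc)=1+\sum h$ is simply false: for $\Qc^{(4)}_8$ the extra points $\eb_1+\eb_2$ and $\eb_3+\eb_4$ each have lattice height $1$ over the facet $\{x_1+\cdots+x_4=1\}$, giving $1+1+1=3$, while $\Vol(\Qc^{(4)}_8)=4$; conversely, when two extra points lie beyond the same facet the cones overlap and $1+\sum h$ can exceed the volume. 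What is true, and suffices for your search bound, is the \emph{per-point} inequality: for each extra point $\wb_k$ one has $\conv({\mathbf 0},\eb_1,\ldots,\eb_d,\wb_k)\subseteq\Pc$, so by monotonicity (Lemma \ref{sta}) its volume $1+\sum_F h_F(\wb_k)$ is at most $4$, which bounds the entries and the number of nonzero coordinates of each $\wb_k$ separately; this should replace your formula. (iii) Finally, the entire content of the theorem --- that the resulting finite search returns exactly the $24$ polytopes of TABLE \ref{table:span}, that each is spanning and not a lattice pyramid, and that they are pairwise inequivalent (note that $\delta$-polynomial, dimension, and vertex count do not separate, e.g., $\Qc^{(4)}_5$ from $\Qc^{(4)}_6$) --- is deferred to an unexecuted enumeration. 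As written, you have a sound program with one false intermediate formula, not a proof; carrying out the enumeration honestly is precisely the work done in \cite{4class}.
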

	Finally, the complete classification of the lattice non-spanning non-simplices $\Pc \subset \RR^d$ with $\Vol(\Pc) = 4$ can be obtained from the following:
	\begin{Theorem}
		\label{nonspan}
	Let	$\Pc \subset \RR^d$ be a lattice non-spanning non-simplex with $\Vol(\Pc)=4$.
		Assume that $\Pc$ is not a lattice pyramid.
		Then there exist, up to unimodular equivalence, exactly the following $4$ possibilities for $\Pc$:
		\begin{enumerate}
			\item[{\rm (1)}] $\delta(\Pc,t)=1+t+t^k+t^{k+1}$ with $k \geq 2:\Ac^{(4)}_{i}, 1 \leq i \leq 3;$
			\item[{\rm (2)}] $\delta(\Pc,t)=1+t+2t^{k}$ with $k \geq 2:\Bc^{(4)}$.
		\end{enumerate}
		The dimension and vertices of $\Pc$ are presented in  {\rm TABLE \ref{table:nonspan}}.
		In particular, $\Pc$ is (non-unimodularly) equivalent to a pyramid over a square.
		\begin{table}[h]
			\centering
			\begin{tabular}{l|l|l}
				& $d$  & {\rm vertices} \\ \hline
				$\Ac^{(4)}_{1}$	& $2k$  &  ${\mathbf 0}, \eb_1,\ldots,\eb_{d-1},\sum_{j=2}^{d-1}\eb_j+2\eb_d,-\eb_{1}+\eb_{2}$
				\\ \hline
				$\Ac^{(4)}_{2}$	& $2k+1$ &   ${\mathbf 0}, \eb_1,\ldots,\eb_{d-1},\sum_{j=3}^{d-1}\eb_j+2\eb_d,\eb_{1}+\eb_{2}$
				\\ \hline
				$\Ac^{(4)}_{3}$	& $2k+2$ &  ${\mathbf 0}, \eb_1,\ldots,\eb_{d-1},\sum_{j=4}^{d-1}\eb_j+2\eb_d,\eb_{1}+\eb_{2}-\eb_{3}$
				\\ \hline
				$\Bc^{(4)}$	& $2k$ & 	 ${\mathbf 0}, \eb_1,\ldots,\eb_{d-1}, \sum_{j=2}^{d-1}\eb_j+2\eb_d, \eb_1-\eb_{2}$ \\ 
			\end{tabular}
			\bigskip
			\caption{The lattice non-spanning non-simplices $\Pc$ with $\Vol(\Pc) = 4$ in Theorem \ref{nonspan}, where $k \geq 2$.} 
			\label{table:nonspan}
		\end{table}
	\end{Theorem}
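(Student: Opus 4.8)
My plan is to reduce the entire problem, via the lattice generated by the lattice points of $\Pc$, to a finite classification of index-$2$ refinements of an iterated lattice pyramid over the unit square, and then to read off the four families by computing $\delta$-polynomials.

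First I would set up the reduction. Let $\Lambda\subseteq\ZZ^d$ be the sublattice generated by all differences $\xb-\yb$ with $\xb,\yb\in\Pc\cap\ZZ^d$. A snake-lemma comparison of the two height-sequences shows that $[\ZZ^d:\Lambda]$ equals the non-spanning index $m:=[\ZZ^{d+1}:\ZZ((\Pc,1)\cap\ZZ^{d+1})]\geq 2$. Regarding $\Pc$ as a lattice polytope with respect to $\Lambda$ multiplies the normalized volume by $1/m$; moreover every lattice point of $\Pc$ already lies in a single coset of $\Lambda$, so the $\Lambda$-lattice points and the $\ZZ^d$-lattice points of $\Pc$ coincide, and $(\Pc,\Lambda)$ is spanning. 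As a combinatorial non-simplex it satisfies $\Vol_\Lambda(\Pc)\geq 2$, whence $\Vol(\Pc)=m\,\Vol_\Lambda(\Pc)\geq 2m$. Combined with $\Vol(\Pc)\leq 4$ this forces $m=2$, $\Vol(\Pc)=4$, $\Vol_\Lambda(\Pc)=2$, and $\delta_1(\Pc)=1$, so $\Pc$ has exactly $d+2$ lattice points, all of them vertices. By Theorem~\ref{spanning} in the case $V=2$, together with the fact that lattice pyramids preserve both the volume and the spanning property, $(\Pc,\Lambda)$ is unimodularly equivalent to an iterated lattice pyramid over the unit square; in suitable $\Lambda$-coordinates $\Pc=\conv({\mathbf 0},\eb_1,\eb_2,\eb_1+\eb_2,\eb_3,\dots,\eb_d)$, with square vertices ${\mathbf 0},\eb_1,\eb_2,\eb_1+\eb_2$ and apices $\eb_3,\dots,\eb_d$.

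Next I would parametrize the original data as the index-$2$ superlattice $\ZZ^d=\Lambda+\ZZ(\ub/2)$ attached to a nonzero class $[\ub]\in\Lambda/2\Lambda\cong\FF_2^{\,d}$, where two refinements yield unimodularly equivalent polytopes exactly when related by the action of $\mathrm{Aut}(\Pc,\Lambda)$ on $\FF_2^{\,d}$. Two conditions single out the admissible classes. First, \emph{no new lattice point may be created}, since otherwise the difference lattice would jump to $\ZZ^d$ and $\Pc$ would be spanning; testing the half-integer point of $\Pc$ supported on the odd coordinates of $\ub$ turns this into an explicit parity inequality. Second, \emph{$\Pc$ must not be a lattice pyramid}: the only vertices that can serve as pyramid apices are $\eb_3,\dots,\eb_d$, and the $\ZZ^d$-lattice distance of $\eb_i$ from its opposite facet $\{y_i=0\}$ equals $1$ or $2$ according as $u_i$ is even or odd, so this condition is equivalent to requiring every apex coordinate $u_i$ ($i\geq 3$) to be odd. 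Hence only the residues $(u_1,u_2)\bmod 2$ remain free. Finally I would compute each $\delta$-polynomial by triangulating $\Pc$ along its circuit ${\mathbf 0}+(\eb_1+\eb_2)=\eb_1+\eb_2$ into the two simplices $\Delta_A,\Delta_B$ and summing half-open box points over $\ZZ^d$, using $\delta(\Pc,t)=\delta(\Delta_A,t)+\delta(\Delta_B,t)-(1-t)\delta(F,t)$ for the shared facet $F$; this gives $\delta(\Pc,t)=1+t+t^{h_A}+t^{h_B}$ with $h_A,h_B$ explicit in $d$ and $(u_1,u_2)$, collapsing to $1+t+2t^{k}$ when $h_A=h_B$. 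Matching the outcomes produces $(u_1,u_2)\equiv(1,1)$ giving $\Ac^{(4)}_1,\Ac^{(4)}_2,\Ac^{(4)}_3$ and $(u_1,u_2)\equiv(1,0)$ giving $\Bc^{(4)}$, the parity inequality excluding the spurious low-dimensional degeneration (the would-be $k=1$ case).

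The main obstacle is getting the orbit count in $\FF_2^{\,d}$ exactly right, which hinges on correctly identifying the action of $\mathrm{Aut}(\Pc,\Lambda)$. The subtle point is that the dihedral symmetries of the square base extend to \emph{lattice} automorphisms of $\Pc$ only when combined with shears that mix the apex directions $\eb_3,\dots,\eb_d$ into $\eb_1,\eb_2$; these shears act on the residues $(u_1,u_2)$ as $(a,b)\mapsto(b+c,a)$ and $(a,b)\mapsto(a+c,b+c)$, where $c\equiv d\pmod 2$ records the parity of the number of apices. Consequently, in odd dimension the three residues $(0,0),(1,0),(1,1)$ fuse into a single orbit, accounting for the unique polytope $\Ac^{(4)}_2$, whereas in even dimension $c=0$ leaves them in three distinct orbits, accounting for $\Ac^{(4)}_1,\Ac^{(4)}_3$ and $\Bc^{(4)}$. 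Overlooking the shears would spuriously overcount the odd-dimensional case, so verifying that the full square symmetry lifts to $GL_d(\ZZ)$—and that no further automorphism merges residues of differing $a+b$ in even dimension—is the crux of the argument; the remaining volume, lattice-point and box-point computations are routine once this is settled.
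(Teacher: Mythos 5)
Your proposal is correct, and it takes a genuinely different route from the paper's proof. The paper first restricts the possible $\delta$-polynomials to $1+t+t^k+t^{k+1}$ and $1+t+2t^k$ via the inequalities (\ref{eq1}), (\ref{eq2}) and the classifications in \cite{BJ,BN}, then triangulates $\Pc$ into two volume-$2$ simplices $T_1,T_2$, normalizes $T_1$ by Theorem \ref{simplex}, and runs a long case analysis over which vertex is dropped ($T_2\in\{\Delta_0,\Delta_1,\Delta_{2k-1},\Delta_d\}$), finishing with explicit unimodular matrices $U_{i,j}$ to merge the resulting candidate polytopes and an ad hoc invariant $L_i$ to certify the non-pyramid property. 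You instead pass to the difference lattice $\Lambda$, use the index identity and the non-simplex bound to force $m=2$ and $\Vol_\Lambda(\Pc)=2$, invoke Theorem \ref{spanning} only in its trivial $V=2$ case to identify $(\Pc,\Lambda)$ with an iterated lattice pyramid over the unit square, and then classify the index-$2$ superlattices $\ZZ^d=\Lambda+\ZZ(\ub/2)$ by the action of $\Aut(\Pc,\Lambda)$ on $\Lambda/2\Lambda$, with non-spanning (``no new lattice point'') and non-pyramid (``all apex coordinates odd'') as orbit-invariant constraints. Your approach buys a great deal: the $\delta$-polynomials fall out of the final computation instead of being hypothesized, the case explosion and matrix bookkeeping are replaced by a single orbit count, and the shape of the answer becomes transparent (one orbit in odd dimension, three in even dimension at least $6$, two in dimension $4$ because the class $(0,0)$ creates a new lattice point). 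What the paper's route buys is explicit vertex coordinates from the start and no need for the automorphism analysis, which you rightly identify as the crux of your argument: the square symmetries lift to $\GL_d(\ZZ)$ only after composition with shears acting by $(a,b)\mapsto(b+c,a)$ with $c\equiv d\pmod 2$, and your identification of $\Aut(\Pc,\Lambda)$ as the dihedral group times the symmetric group on apices is correct (apices and square vertices have different vertex degrees, and each combinatorial automorphism is realized by a unique affine map, which is indeed $\Lambda$-unimodular).

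One slip in your final matching, though not in your method: in even dimension it is the orbit $(0,0)$, not $(1,1)$, that corresponds to $\Ac^{(4)}_3$. Computing the difference lattice of $\Ac^{(4)}_3$ directly and expressing the refinement vector in a basis adapted to its square $\{\eb_{d-3},\eb_{d-2},\eb_{d-1},-\eb_{d-3}+\eb_{d-2}+\eb_{d-1}\}$ gives $(u_1,u_2)\equiv(0,0)$ and $\delta=1+t+t^{(d-2)/2}+t^{d/2}$, while $(1,1)$ yields $\Ac^{(4)}_1$ and $(1,0)$ yields $\Bc^{(4)}$. Indeed this is forced: in a fixed even dimension $d\geq 6$ both $\Ac^{(4)}_1$ and $\Ac^{(4)}_3$ occur with distinct $\delta$-polynomials, so they cannot come from the same orbit. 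Since the three even-dimensional orbits have pairwise distinct $\delta$-polynomials, the $\delta$-computation you propose would have corrected this labeling automatically.
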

	
	The present paper is organized as follows:
	First, in Section \ref{sec1}, we introduce basic materials on lattice polytopes and summarize lemmata which will be indispensable in what follows. We then, in Section \ref{sec2}, prove Theorem \ref{simplex}.
	Finally, in Section \ref{sec3}, we prove Theorem \ref{nonspan}.

	\begin{acknowledgement}{\rm
			The authors would like to thank Gabriele Balletti for pointing out
			minor imperfections in our classification done in our first draft and,
			in addition, for informing them about the paper \cite{4class}.
			The authors would like to thank anonymous referees for their careful readings and many helpful comments which make this paper more readable, in particular, the proof of Theorem \ref{nonspan} was significantly shortened. The first author is partially supported by JSPS KAKENHI 19H00637.
						The second author is partially supported by Grant-in-Aid for JSPS Fellows 16J01549.
		}
	\end{acknowledgement}

	\section{Basic materials on lattice polytopes}
	\label{sec1}
	In this section, we recall basic materials on lattice polytopes and we prepare essential lemmata in this paper.
	First, we introduce the associated finite abelian groups of lattice simplices.
	For a lattice simplex $\Delta \subset \RR^d$ of dimension $d$ whose vertices are $\vb_0,\ldots,\vb_d \in \ZZ^d$,
	set 
	$$\Lambda_\Delta=\left\{(\lambda_0,\ldots,\lambda_d) \in (\RR/\ZZ)^{d+1} : \sum\limits_{i=0}^{d}\lambda_i(\vb_i,1) \in \ZZ^{d+1}   \right\}.$$
	The collection $\Lambda_\Delta$ forms a finite abelian group with addition defined as follows: 
	For $(\lambda_0,\ldots,\lambda_d) \in (\RR/\ZZ)^{d+1}$ and $(\lambda_0',\ldots,\lambda_d') \in (\RR/\ZZ)^{d+1}$,  $(\lambda_0,\ldots,\lambda_d)+(\lambda_0',\ldots,\lambda_d')=(\lambda_0+\lambda_0',\ldots,\lambda_d+\lambda_d') \in (\RR/\ZZ)^{d+1}$.
	We denote the unit of $\Lambda_\Delta$ by $\bold{0}$, and the inverse of $\lambda$ by $-\lambda$,
	and also denote $\underbrace{\lambda+\cdots+\lambda}_{j}$  by $j\lambda$ for an integer $j>0$ and $\lambda \in \Lambda_\Delta$.
	For $\lambda=(\lambda_0,\ldots,\lambda_{d}) \in \Lambda_{\Delta}$, where each $\lambda_i$ is taken with $0 \leq \lambda_i < 1$, we set
	$\textnormal{ht}(\lambda)=\sum_{i=0}^{d}\lambda_i \in \ZZ$,
	and $\textnormal{ord}(\lambda)=\min\{\ell \in \ZZ_{> 0} : \ell\lambda={\bf 0} \}$.
	
	In \cite{BH2}, it is shown that there is a bijection between unimodular equivalence classes of $d$-dimensional lattice simplices with a 
	chosen ordering of their vertices and finite abelian subgroups of $(\RR/\ZZ)^{d+1}$ such that the sum of all entries of each element is an integer.
	In particular, two lattice simplices $\Delta$ and $\Delta'$ are unimodularly equivalent if and only if there exist orderings of their vertices such that $\Lambda_\Delta=\Lambda_{\Delta'}$.
	Moreover, we can characterize lattice pyramids in terms of the associated finite abelian groups by using the following lemma.
	\begin{Lemma}[{\cite[Lemma 12]{Nill}}]
		\label{lem:pyr}
		Let $\Delta \subset \RR^d$ be a lattice simplex of dimension $d$.
		Then $\Delta$ is a lattice pyramid if and only if there is $i \in \{0,\ldots,d\}$ such that $\lambda_i=0$ for all $(\lambda_0,\ldots,\lambda_d) \in \Lambda_\Delta$.
	\end{Lemma}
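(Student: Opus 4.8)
The plan is to reduce the statement to a computation about a single coordinate projection $\pi_i\colon\Lambda_\Delta\to\RR/\ZZ$, $(\lambda_0,\ldots,\lambda_d)\mapsto\lambda_i$, and to identify its image with the lattice height of the vertex $\vb_i$ over the opposite facet. Since relabelling the vertices $\vb_0,\ldots,\vb_d$ simultaneously permutes the coordinates of every element of $\Lambda_\Delta$, it suffices to treat $i=d$. So I want to prove two things: first, that $\lambda_d\equiv 0$ for all $\lambda\in\Lambda_\Delta$ if and only if $\vb_d$ has lattice height $1$ over the facet $F=\operatorname{conv}(\vb_0,\ldots,\vb_{d-1})$; and second, that this height-$1$ condition is exactly what it means for $\Delta$ to be a lattice pyramid with apex $\vb_d$. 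Because a simplex is a lattice pyramid precisely when one of its vertices is such an apex, running these equivalences over all $i$ yields the lemma.

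First I would note that $\Lambda_\Delta$ is unchanged if we replace $\Delta$ by $f_U(\Delta)+\wb$ while keeping the vertex order, since the lifts transform by $(\vb_i,1)\mapsto(\vb_i,1)\left(\begin{smallmatrix}U&0\\ \wb&1\end{smallmatrix}\right)$ with $\left(\begin{smallmatrix}U&0\\ \wb&1\end{smallmatrix}\right)\in\GL_{d+1}(\ZZ)$, which preserves membership in $\ZZ^{d+1}$. I may therefore choose convenient coordinates: apply an affine unimodular map carrying $\aff(F)$ to $\{x_d=0\}$ with induced lattice $\ZZ^{d-1}\times\{0\}$, so that $\vb_0,\ldots,\vb_{d-1}$ have last coordinate $0$ and $\vb_d=(\ub,h)$ with $\ub\in\ZZ^{d-1}$ and $h\geq 1$ the lattice height of $\vb_d$ over $F$. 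Reading off the $x_d$-coordinate of $\sum_{i=0}^d\lambda_i(\vb_i,1)$, which equals $h\lambda_d$, shows every $\lambda\in\Lambda_\Delta$ satisfies $h\lambda_d\in\ZZ$, so $\pi_d(\Lambda_\Delta)\subseteq\tfrac1h\ZZ/\ZZ$. In particular $h=1$ forces $\lambda_d\equiv 0$ for all $\lambda$, which is the easy direction.

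The hard part is the converse: producing an element of $\Lambda_\Delta$ with $\lambda_d\not\equiv 0$ whenever $h\geq 2$. Here I would set $\lambda_d=\tfrac1h$ and solve for the remaining coordinates. Writing $\vb_j'\in\ZZ^{d-1}$ for the projection of $\vb_j$ to its first $d-1$ coordinates, the vectors $(\vb_j',1)\in\ZZ^{d}$ for $0\leq j\leq d-1$ are linearly independent (as $F$ is a $(d-1)$-simplex) and hence span $\RR^{d}$. Thus the real system $\sum_{j=0}^{d-1}\lambda_j(\vb_j',1)=-\tfrac1h(\ub,1)$ over the $d$ coordinates other than $x_d$ admits a solution $(\lambda_0,\ldots,\lambda_{d-1})$, and a direct check of all $d+1$ coordinates shows $\sum_{i=0}^{d}\lambda_i(\vb_i,1)\in\ZZ^{d+1}$. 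Hence $\lambda=(\lambda_0,\ldots,\lambda_{d-1},\tfrac1h)\in\Lambda_\Delta$ with $\lambda_d=\tfrac1h\not\equiv 0$, giving $\pi_d(\Lambda_\Delta)=\tfrac1h\ZZ/\ZZ$; so $\pi_d\equiv 0$ forces $h=1$.

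Finally I would close the loop between $h=1$ and being a lattice pyramid. In the coordinates above, $h=1$ means $\vb_d=(\ub,1)$, and the unimodular shear $(x_1,\ldots,x_{d-1},x_d)\mapsto(x_1-u_1x_d,\ldots,x_{d-1}-u_{d-1}x_d,x_d)$ fixes $F\subset\{x_d=0\}$ and sends $\vb_d$ to $\eb_d$, exhibiting $\Delta$ as $\operatorname{Pyr}(F)$; conversely the apex of any lattice pyramid lies at lattice height $1$ over the opposite facet. The only routine points needing care are the existence of the initial affine unimodular normalization of $\aff(F)$ and the identification of the lattice height with the $x_d$-coordinate of $\vb_d$; the genuine content is the explicit construction of the nonzero element of $\Lambda_\Delta$ in the third paragraph.
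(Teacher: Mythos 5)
Your proof is correct, but there is nothing in the paper to compare it against: the authors do not prove this lemma, they import it verbatim as Lemma 12 of Nill's paper \cite{Nill}. So your write-up supplies an argument the paper omits, and it is the natural self-contained one. All the load-bearing steps check out in the paper's conventions: the invariance of $\Lambda_\Delta$ under $\Delta \mapsto f_U(\Delta)+\bold{w}$ via the block matrix $\left(\begin{smallmatrix} U & 0 \\ \bold{w} & 1\end{smallmatrix}\right)$ is right for row vectors ($f_U(\vb)=\vb U$); after normalizing so that the facet $F$ opposite $\vb_d$ spans $\{x_d=0\}$ with induced lattice $\ZZ^{d-1}\times\{0\}$, the $x_d$-coordinate of $\sum_i \lambda_i(\vb_i,1)$ is indeed $h\lambda_d$, which gives the containment $\pi_d(\Lambda_\Delta)\subseteq \frac{1}{h}\ZZ/\ZZ$ and hence the easy direction; and for $h\geq 2$ your linear system $\sum_{j=0}^{d-1}\lambda_j(\vb_j',1)=-\frac{1}{h}(\ub,1)$ is solvable because the height-one lifts of the $d$ vertices of the $(d-1)$-simplex $F$ form a basis of $\RR^d$, producing an element with $\lambda_d=\frac{1}{h}\neq 0$. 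The shear identifying $h=1$ with the paper's definition of ${\rm Pyr}$, and the converse observation that an apex lies at lattice height $1$ over its opposite facet (so that a pyramid forces some $\lambda_i\equiv 0$), correctly close the equivalence. Two points you flag as routine should be said explicitly in a polished version, though neither is a gap: the normalization exists because $\ZZ^d\cap\aff(F)$ is a saturated rank-$(d-1)$ sublattice and hence extends to a basis of $\ZZ^d$, and one composes with the reflection $x_d\mapsto -x_d$ if necessary to arrange $h\geq 1$.
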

	
	It is well known that the $\delta$-polynomial of the lattice simplex $\Delta$ can be computed as follows: 
	\begin{Lemma}[{\cite[Proposition 2.6]{BN}}]
		\label{delta}
		Let $\Delta$ be a lattice simplex of dimension $d$ whose $\delta$-polynomial  equals $\delta_0+\delta_1 t+\cdots+\delta_{d}t^d$.
		Then for each $i$, we have $\delta_i=|\{\lambda \in \Lambda_\Delta : \textnormal{ht}(\lambda)=i\}|$.
		In particular, one has $\Vol(\Delta)=|\Lambda_{\Delta}|$.
	\end{Lemma}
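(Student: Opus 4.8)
The plan is to realize $\delta(\Delta,t)$ as the generating function, by last coordinate, of the lattice points of a half-open fundamental parallelepiped, and then to match that parallelepiped with $\Lambda_\Delta$ through the height function. Write $\vb_0,\ldots,\vb_d$ for the vertices of $\Delta$ and set $\wb_i=(\vb_i,1)\in\ZZ^{d+1}$. Since $\Delta$ is $d$-dimensional the vectors $\wb_0,\ldots,\wb_d$ are linearly independent, hence a basis of $\RR^{d+1}$. Let $C=\{\sum_{i=0}^d r_i\wb_i : r_i\ge 0\}$ be the cone they generate, and for $\zb=(z_1,\ldots,z_{d+1})$ write $z_{d+1}$ for its last coordinate. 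A lattice point of $C$ with $z_{d+1}=n$ is precisely a point of $n\Delta\cap\ZZ^d$ lifted to height $n$, so
\[
\sum_{\zb\in C\cap\ZZ^{d+1}} t^{\,z_{d+1}} \;=\; 1+\sum_{n\ge 1}|n\Delta\cap\ZZ^d|\,t^n.
\]

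First I would establish the decomposition of the cone. Put $\Pi=\{\sum_{i=0}^d \lambda_i\wb_i : 0\le \lambda_i<1\}$. Because the $\wb_i$ form a basis, writing $r_i=\lfloor r_i\rfloor+\{r_i\}$ shows that every $\zb\in C\cap\ZZ^{d+1}$ is uniquely of the form $\zb=\ub+\sum_{i=0}^d m_i\wb_i$ with $\ub\in\Pi\cap\ZZ^{d+1}$ and $m_i\in\ZZ_{\ge0}$; uniqueness is immediate from linear independence. Since each $\wb_i$ has last coordinate $1$, tracking last coordinates and summing the $d+1$ geometric series $\sum_{m\ge0}t^m=1/(1-t)$ gives
\[
\sum_{\zb\in C\cap\ZZ^{d+1}} t^{\,z_{d+1}} \;=\; \frac{\sum_{\ub\in\Pi\cap\ZZ^{d+1}} t^{\,u_{d+1}}}{(1-t)^{d+1}}.
\]
Comparing the two displays with the definition of $\delta(\Delta,t)$ yields $\delta(\Delta,t)=\sum_{\ub\in\Pi\cap\ZZ^{d+1}} t^{\,u_{d+1}}$.

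It then remains to identify $\Pi\cap\ZZ^{d+1}$ with $\Lambda_\Delta$. The map $\lambda=(\lambda_0,\ldots,\lambda_d)\mapsto\sum_{i=0}^d\lambda_i\wb_i$, with representatives $0\le\lambda_i<1$, is a bijection from $\Lambda_\Delta$ onto $\Pi\cap\ZZ^{d+1}$: the condition $\sum_i\lambda_i\wb_i\in\ZZ^{d+1}$ defining $\Lambda_\Delta$ is exactly the requirement that the corresponding point of $\Pi$ be a lattice point, and conversely every lattice point of $\Pi$ is such a combination with coefficients determined modulo $\ZZ$. Under this bijection the last coordinate of $\sum_i\lambda_i\wb_i$ equals $\sum_i\lambda_i=\textnormal{ht}(\lambda)$, which is therefore the integer recorded by that coordinate. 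Consequently the coefficient of $t^i$ in $\delta(\Delta,t)$ is $|\{\lambda\in\Lambda_\Delta : \textnormal{ht}(\lambda)=i\}|$, and summing over $i$ gives $\Vol(\Delta)=\sum_i\delta_i=|\Lambda_\Delta|$.

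The main obstacle is the tiling statement, namely that $\Pi$ together with the translates by the semigroup $\sum_i\ZZ_{\ge0}\wb_i$ covers $C$ without overlap; all the rest is bookkeeping with geometric series and a change of variables. This follows cleanly from the linear independence of the $\wb_i$, which also guarantees that $\textnormal{ht}$ takes integer values on $\Lambda_\Delta$, so I expect no genuine difficulty beyond setting up the half-open cone decomposition carefully.
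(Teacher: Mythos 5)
Your proof is correct and complete. The paper itself gives no proof of this lemma---it is quoted directly from \cite[Proposition 2.6]{BN}---and your argument, tiling the cone over $(\Delta,1)$ by the translates of the half-open fundamental parallelepiped $\Pi$ under the semigroup generated by the $\wb_i$, identifying $\Pi\cap\ZZ^{d+1}$ with $\Lambda_\Delta$, and reading off $\textnormal{ht}(\lambda)$ as the last coordinate, is precisely the standard argument underlying that citation, with all the essential steps (unique decomposition, the bijection, the geometric-series computation) carried out soundly.
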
 
	
	Let $\Pc \subset \RR^d$ be a lattice polytope of dimension $d$.
	Given integers $n = 1, 2, \ldots,$ we define the function $L_{\Pc}(n)$ as follows:
	\[
	L_{\Pc}(n):= \left| n\Pc \cap \ZZ^{d} \right|.
	\]
	Then it is known that $L_{\Pc}(n)$ is a polynomial in $n$ of degree $d$ with $L_{\Pc}(0) = 1$ (see \cite{Ehrhart}). 
	We call $L_{\Pc}(n)$ the {\em Ehrhart polynomial} of $\Pc$.  
	The Ehrhart polynomial $L_{\Pc}(n)$ can be computed by using the $\delta$-vector of $\Pc$.
		\begin{Lemma}
			\label{delta_ehr}
		Let $\Pc \subset \RR^d$ be a lattice polytope of dimension $d$ and 
		$(\delta_0,\ldots,\delta_d)$ the $\delta$-vector of $\Pc$.
		Then one has
		$$L_{\Pc}(n)=\sum_{i=0}^{d}\delta_{i}\binom{n+d-i}{d}.$$
	\end{Lemma}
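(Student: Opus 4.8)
The plan is to prove this purely at the level of generating functions, reading off the coefficient of $t^n$ from the defining formula of the $\delta$-polynomial. First I would observe that since $L_{\Pc}(0) = |0 \cdot \Pc \cap \ZZ^d| = 1$, the bracketed expression in the definition of $\delta(\Pc,t)$ is nothing but the full Ehrhart generating series $\sum_{n=0}^{\infty} L_{\Pc}(n) t^n$. Consequently the defining formula rearranges, as an identity of formal power series in $t$, to
\[
\sum_{n=0}^{\infty} L_{\Pc}(n) t^n = \frac{\delta(\Pc,t)}{(1-t)^{d+1}} = \frac{1}{(1-t)^{d+1}} \sum_{i=0}^{d} \delta_i t^i,
\]
where the last sum is finite because $\delta(\Pc,t)$ is known to be a polynomial of degree at most $d$.

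Next I would expand $1/(1-t)^{d+1}$ by the negative binomial theorem as $\sum_{m=0}^{\infty} \binom{m+d}{d} t^m$, substitute this into the previous display, and multiply the two series to obtain
\[
\sum_{n=0}^{\infty} L_{\Pc}(n) t^n = \sum_{i=0}^{d} \sum_{m=0}^{\infty} \delta_i \binom{m+d}{d} t^{m+i}.
\]
Extracting the coefficient of $t^n$ on both sides (legitimate, since both sides are genuine formal power series) and writing $m = n-i$ then gives
\[
L_{\Pc}(n) = \sum_{i=0}^{d} \delta_i \binom{n+d-i}{d},
\]
which is exactly the asserted formula.

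Finally I would dispose of the index bookkeeping. For $0 \leq i \leq d$ and $0 \leq n < i$ the value $m = n-i$ is negative and contributes no term to the series, but in that range one has $0 \leq n+d-i < d$, so $\binom{n+d-i}{d}=0$ as well; hence the displayed formula holds verbatim for every $n \geq 0$ with no case distinction. There is essentially no hard step here: the one point deserving care is that the manipulations take place in the ring of formal power series, so that rearranging factors and comparing coefficients of $t^n$ is justified.
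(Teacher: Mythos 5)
Your proof is correct and complete: the paper itself states this lemma without proof (it is the standard fact relating the Ehrhart polynomial to the $\delta$-vector), and your generating-function argument --- rewriting the defining identity as $\sum_{n\ge 0}L_{\Pc}(n)t^n=\delta(\Pc,t)/(1-t)^{d+1}$, expanding by the negative binomial theorem, and comparing coefficients --- is exactly the standard derivation the paper implicitly relies on. Your handling of the edge case $n<i\le d$, where $\binom{n+d-i}{d}=0$ because $0\le n+d-i<d$, correctly disposes of the only point where the coefficient extraction needs care.
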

	
	Finally, we recall a useful technique to compute Ehrhart polynomials. 

	\begin{Lemma}
		\label{delta_split}
		Let $\Pc \subset \RR^d$ be a lattice non-simplex.
		Assume that $\Pc$ has a lattice triangulation which consists of two maximal simplices $\Delta_1$ and $\Delta_2$.
		Then one has $L_\Pc(n)=L_{\Delta_1}(n)+L_{\Delta_2}(n)-L_{\Delta_1 \cap \Delta_2}(n)$.
	\end{Lemma}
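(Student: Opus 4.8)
The plan is to establish the stated identity of polynomials by first proving the corresponding identity of lattice-point counts for every positive integer $n$, and then invoking the fact that a polynomial is determined by its values at infinitely many integers. The whole argument is an inclusion--exclusion computation once the geometry of dilation is clarified.

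First I would record the behaviour of the triangulation under dilation. For a fixed positive integer $n$ the map $\xb \mapsto n\xb$ is a bijection of $\RR^d$, so it commutes with unions and intersections. Since $\{\Delta_1,\Delta_2\}$ is a lattice triangulation of $\Pc$ we have $\Pc=\Delta_1 \cup \Delta_2$, and by hypothesis $\Delta=\Delta_1 \cap \Delta_2$. Applying the dilation therefore gives
$$n\Pc = n\Delta_1 \cup n\Delta_2 \qquad \text{and} \qquad n\Delta_1 \cap n\Delta_2 = n(\Delta_1 \cap \Delta_2)=n\Delta.$$
Note that $\Delta$, being a common face of the lattice simplices $\Delta_1$ and $\Delta_2$, is itself a lattice polytope (of dimension possibly smaller than $d$), so $L_\Delta$ is a genuine Ehrhart polynomial.

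Next I would intersect with $\ZZ^d$ and apply the elementary inclusion--exclusion principle $|A \cup B|=|A|+|B|-|A\cap B|$ to the finite sets $A=n\Delta_1 \cap \ZZ^d$ and $B=n\Delta_2 \cap \ZZ^d$. Using the identities just established, $A \cup B = n\Pc \cap \ZZ^d$ and $A \cap B = n\Delta \cap \ZZ^d$, and hence
$$L_\Pc(n)=L_{\Delta_1}(n)+L_{\Delta_2}(n)-L_{\Delta}(n)$$
for every positive integer $n$. Finally, each of $L_\Pc$, $L_{\Delta_1}$, $L_{\Delta_2}$, $L_\Delta$ is a polynomial in $n$ by Ehrhart's theorem recalled above, so both sides of the displayed equation are polynomials that agree at all positive integers; therefore they coincide as polynomials.

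There is no deep obstacle here: the only points requiring a moment of care are the geometric identity $n\Delta_1 \cap n\Delta_2 = n\Delta$, which holds simply because scaling by $n$ is a bijection and $\Delta=\Delta_1\cap\Delta_2$, and the remark that $L_\Delta$ remains a well-defined Ehrhart polynomial even when $\Delta$ is lower-dimensional. The passage from the numerical identity to the polynomial identity is then automatic.
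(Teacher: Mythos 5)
Your proof is correct and follows essentially the same route as the paper's: both reduce the polynomial identity to inclusion--exclusion on the lattice points of the dilated polytopes, using that dilation preserves the decomposition $\Pc=\Delta_1\cup\Delta_2$ and the intersection $\Delta_1\cap\Delta_2=\Delta$. Your version merely spells out two details the paper leaves implicit (that $n\Delta_1\cap n\Delta_2=n\Delta$ because scaling is a bijection, and that agreement of polynomials at all positive integers forces equality as polynomials), which is fine.
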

	\section{Proof of Theorem \ref{simplex}}
	\label{sec2}
	
	
	In this section, we classify the lattice simplices $\Delta \subset \RR^d$ of dimension $d$ with $\Vol(\Delta) \leq 4$ up to unimodular equivalence and lattice pyramid constructions. Namely, we prove Theorem \ref{simplex}.
	In order to do this job, we consider the following three cases:
	\begin{enumerate}
		\item $\Vol(\Delta)=2$ (Subsection \ref{sub2.1}); 
		\item $\Vol(\Delta)=3$ (Subsection \ref{sub2.2}); 
		\item $\Vol(\Delta)=4$ (Subsection \ref{sub2.3}).
	\end{enumerate}
	\subsection{The case $\Vol(\Delta)=2$}
	\label{sub2.1}
	In this subsection, we consider the case where $\Vol(\Delta)=2$.
	It follows from Theorem \ref{delta4} (1) that $i_1 \leq \lfloor(d+1)/2 \rfloor$.
	Since $|\Lambda_{\Delta}|=2$, for any $\lambda \in \Lambda_{\Delta}\setminus \{{\mathbf 0}\}$, $\ord(\lambda)=2$. Hence since $\Delta$ is not a lattice pyramid, by using Lemma \ref{lem:pyr}, it follows that  $\Lambda_{\Delta}$ is generated by one element $(1/2,\ldots,1/2)$ and $d+1$ is an even number.
	By using Lemma \ref{delta}, one has $2i_1=d+1$. Moreover it is easy to see that $\Lambda_{\Delta^{(2)}}=\Lambda_{\Delta}$ with any ordering of the vertices of $\Delta^{(2)}$.
	Hence this completes the proof of the case where $\Vol(\Delta)=2$.
	
	\subsection{The case $\Vol(\Delta)=3$}
	\label{sub2.2}
	In this subsection, we consider the case where $\Vol(\Delta)=3$.
	It follows from Theorem \ref{delta4} (2) that $2i_1 \geq i_2$ and $i_1+i_2 =(d+1)/2$.
	For nonnegative integers $a$ and $b$,
	we let $\Lambda(a,b)$ be the finite abelian subgroups of $(\RR/\ZZ)^{a+b}$ defined as follows:
	$$\Lambda(a,b)= \left\langle \left(\underbrace{\dfrac{1}{3},\ldots,\dfrac{1}{3}}_{a},\underbrace{\dfrac{2}{3},\ldots,\dfrac{2}{3}}_{b}\right) \right \rangle.$$
	Since $\Vol(\Delta)=|\Lambda_{\Delta}|=3$,
	for any $\lambda \in \Lambda_{\Delta}\setminus \{{\mathbf 0}\}$, $\ord(\lambda)=3$. 
	Hence since $\Delta$ is not a lattice pyramid, by Lemma  \ref{lem:pyr}, there exist nonnegative integers $a, b$ with $a+b=d+1$ such that $\Lambda_{\Delta}=\Lambda(a,b)$ with some ordering of the vertices of $\Delta$.
	Since $\Lambda(a,b)$ coincides with $\Lambda(b,a)$ by reordering of the coordinates, we can assume that $a \geq b$.
	Then by using Lemma \ref{delta}, one has $i_1=(a+2b)/3$ and $i_2=(2a+b)/3$.
	Hence we obtain $a=-i_1+2i_2,b=2i_1-i_2$ and $d+1=a+b=i_1+i_2$.
	Moreover, it is easy to see that $\Lambda_{\Delta^{(3)}}=\Lambda(a,b)$ with some ordering of the vertices of $\Delta^{(3)}$.
	Hence this completes the proof of the case where $\Vol(\Delta)=3$.

	\subsection{The case $\Vol(\Delta)=4$}
	\label{sub2.3}
	In this subsection, we consider the case where $\Vol(\Delta)=4$.
	For nonnegative integers $a,b,c$,
	we let $\Lambda_1(a,b,c)$ and $\Lambda_2(a,b,c)$ be the finite abelian subgroups of $(\RR/\ZZ)^{a+b+c}$ defined as follows:
	$$\Lambda_1(a,b,c)= \left\langle \left(\underbrace{\dfrac{1}{4},\ldots,\dfrac{1}{4}}_{a},\underbrace{\dfrac{1}{2},\ldots,\dfrac{1}{2}}_{b},\underbrace{\dfrac{3}{4},\ldots,\dfrac{3}{4}}_{c}\right) \right \rangle;$$
	$$\Lambda_2(a,b,c)= \left\langle \left(\underbrace{\dfrac{1}{2},\ldots,\dfrac{1}{2}}_{a},\underbrace{\dfrac{1}{2},\ldots,\dfrac{1}{2}}_{b},\underbrace{0,\ldots,0}_{c}\right),\left(\underbrace{0,\ldots,0}_{a},\underbrace{\dfrac{1}{2},\ldots,\dfrac{1}{2}}_{b},\underbrace{\dfrac{1}{2},\ldots,\dfrac{1}{2}}_{c} \right) \right \rangle.$$
	Since $\Vol(\Delta)=|\Lambda_{\Delta}|=4$,
	for any $\lambda \in \Lambda_{\Delta}\setminus \{{\mathbf 0}\}$, $\ord(\lambda) \in \{2,4\}$. 
	Hence since $\Delta$ is not a lattice pyramid, by Lemma \ref{lem:pyr}, there exist nonnegative integers $a,b,c$ with $d+1=a+b+c$ such that $\Lambda_{\Delta}$ coincides with $\Lambda_1(a,b,c)$ or $\Lambda_2(a,b,c)$ with some ordering of the vertices of $\Delta$.
	
	First, suppose that $\Lambda_{\Delta}=\Lambda_1(a,b,c)$ with some ordering of the vertices of $\Delta$.
	Then since $\Lambda_1(a,b,c)$ coincides with $\Lambda_1(c,b,a)$ by reordering of the coordinates, we may assume that $a \geq c$.
	Moreover, by using Lemma \ref{delta}, one has $\{i_1,i_2,i_3\}=\{(a+2b+3c)/4, (a+c)/2,(3a+2b+c)/4\}$.
	Set $(h_1,h_2,h_3)=((a+2b+3c)/4, (a+c)/2,(3a+2b+c)/4)$.
	Then we obtain $a=-h_1+h_2+h_3, b=h_1-2h_2+h_3$ and $c=h_1+h_2-h_3$.
	Since $a \geq c$, 
	$(h_1,h_3)$ is $(i_1,i_2)$, $(i_1,i_3)$ or $(i_2,i_3)$.
	Hence it follows from $a,b,c \geq 0$ that one of the following conditions is satisfied:
	\begin{enumerate}
		\item $i_1+i_3 \geq 2i_2$ and $d+1=i_1+i_3$, and $\Lambda_{\Delta}=\Lambda_1(-i_1+i_2+i_3,i_1-2i_2+i_3,i_1+i_2-i_3)$;
		\item $i_2+i_3 \geq 2i_1$ and $d+1=i_2+i_3$, and $\Lambda_{\Delta}=\Lambda_1(i_1-i_2+i_3,-2i_1+i_2+i_3,i_1+i_2-i_3)$;
		\item  $i_1+i_2 \geq 2i_3$ and $d+1=i_1+i_2$, and  $\Lambda_{\Delta}=\Lambda_1(-i_1+i_2+i_3,i_1+i_2-2i_3,i_1-i_2+i_3)$;
	\end{enumerate}
	If $i_1=i_2$ or $i_2=i_3$, then the condition $(1)$ is equivalent to one of the conditions $(2)$ and $(3)$.
	Since $i_1+i_2 \geq 2i_3$ implies that $i_1=i_2=i_3$, if the condition $(3)$ is satisfied, then condition $(2)$ is satisfied.
	Moreover, it always follows that $i_2+i_3 \geq 2i_1$.
	Hence we know that one of the following conditions is satisfied:
	\begin{enumerate}
		\item[(1')] $i_1 < i_2 < i_3$, $i_1+i_3 \geq 2i_2$ and $d+1=i_1+i_3$, and $\Lambda_{\Delta}=\Lambda_1(-i_1+i_2+i_3,i_1-2i_2+i_3,i_1+i_2-i_3)$;
		\item[(2')] $d+1=i_2+i_3$ and  $\Lambda_{\Delta}=\Lambda_1(i_1-i_2+i_3,-2i_1+i_2+i_3,i_1+i_2-i_3)$.
	\end{enumerate}
Now, we assume that the condition (1') is satisfied.
Then the conditions $i_1+i_3 \leq 2i_2$, $i_2 \leq \lfloor(d+1)/2 \rfloor$ and $i_1+i_3=d+1$ imply  \[2i_2=i_1+i_3=d+1.\]
This condition and the conditions $i_1<i_2<i_3$ and $i_3 \leq i_1+i_2$ satisfy  the condition of Theorem \ref{delta4} (3).
	Moreover, then, it is easy to see that  $\Lambda_{\Delta^{(4)}_1}=\Lambda_1(a,b,c)$ with some ordering of the vertices of $\Delta^{(4)}_1$.
	On the other hand, we assume that the condition (2') is satisfied.
	Then the conditions $i_3 \leq i_1+i_2$ and $i_2+i_3 =d+1$ satisfy the condition of Theorem \ref{delta4} (3).
Moreover, then, it is easy to see that $\Lambda_{\Delta^{(4)}_2}=\Lambda_1(a,b,c)$ with some ordering of the vertices of $\Delta^{(4)}_2$.

	Next, we suppose that $\Lambda_{\Delta}=\Lambda_2(a,b,c)$ with some ordering of the vertices of $\Delta$.
	It follows that $\Lambda_2(a,b,c)$ coincides with $\Lambda_2(b,a,c)$ (resp. $\Lambda_2(c,b,a)$) by reordering of the coordinates.
	Hence we may assume that $a \geq b \geq c$.
	Then by using Lemma \ref{delta},  one has $(i_1,i_2,i_3)=((b+c)/2, (a+c)/2,(a+b)/2)$.
	Therefore, we obtain $d+1=i_1+i_2+i_3$ and $\Lambda_{\Delta}=\Lambda_2(-i_1+i_2+i_3,i_1-i_2+i_3,i_1+i_2-i_3)$.
	The conditions $i_3 \leq i_1+i_2$ and $i_1+i_2+i_3 =d+1$ satisfy the condition of Theorem \ref{delta4} (3).
	Moreover, it is easy to see that $\Lambda_{\Delta^{(4)}_3}=\Lambda_2(a,b,c)$ with some ordering of the vertices of $\Delta^{(4)}_3$.
	Hence this completes the proof of the case where $\Vol(\Delta)=4$.
	
	\bigskip
	Therefore, Theorem \ref{simplex} follows.
	\section{Proof of Theorem \ref{nonspan}}
	\label{sec3}
	In this section, we prove Theorem \ref{nonspan}.
	First, we show the following.
	\begin{Lemma}
		One has
			$\delta(\Ac^{(4)}_i,t)=1+t+t^k+t^{k+1}$ for $1 \leq i \leq 3$ and
			 $\delta(\Bc^{(4)},t)=1+t+2t^{k}$.
	\end{Lemma}
\begin{proof}
We show that $\delta(\Bc^{(4)},t)=1+t+2t^{k}$.
Let $\Delta_1=\conv({\bf 0}, \eb_1,\ldots,\eb_{d-1},\eb_2+\cdots+\eb_{d-1}+2\eb_d)$ and
$\Delta_2=\conv( \eb_1,\ldots,\eb_{d-1},\eb_2+\cdots+\eb_{d-1}+2\eb_d,\eb_1+\eb_2)$.
Then $\Bc^{(4)}$ has a lattice triangulation which consists of two maximal simplices $\Delta_1$ and $\Delta_2$.
Indeed, the hyperplane 
\[
\Hc=\{(x_1,\ldots,x_d) \in \RR^d : 2(x_1+\cdots+x_{d-1})-(2k-1)x_d=2 \}
\]
divides $\Bc^{(4)}$ into $\Delta_1$ and $\Delta_2$.
Moreover, one has $\delta(\Delta_1,t)=\delta(\Delta_2,t)=1+t^k$
and $\delta(\Delta_1 \cap \Delta_2,t)=1$. 
Hence by using Lemmas \ref{delta_ehr} and \ref{delta_split}, we obtain
\begin{equation*}
\begin{split}
L_{\Bc^{(4)}}(n)&=\binom{n+d}{d}+2\binom{n+d-k}{d}+\left(\binom{n+d}{d}-\binom{n+d-1}{d-1}\right)	\\
&=\binom{n+d}{d}+2\binom{n+d-k}{d}+\binom{n+d-1}{d}.
\end{split}
\end{equation*}
Therefore, one has 	 $\delta(\Bc^{(4)},t)=1+t+2t^{k}$.
The remaining cases can be shown by the same way.

\end{proof}
	Let $\Pc \subset \RR^d$ be a lattice non-spanning non-simplex of dimension $d$ with $\Vol(\Pc)=4$.
	Assume that $\Pc$ is not a lattice pyramid.
	Finally, we show that $\Pc$ is unimodularly equivalent to one of the lattice polytopes in Theorem \ref{nonspan}.
	We consider the affine lattice $L_{\Pc}$ generated by $\Pc \cap \ZZ^d$.
	Let $\tilde{\Pc} \subset \RR^d$ be the lattice spanning polytope given by the vertices of $\Pc$ with respect to the lattice $L_{\Pc}$.
	Since $\Pc$ is not a simplex and is not spanning,  the normalized volume of $\tilde{\Pc}$ is equal to $2$.
	Hence by Theorem \ref{spanning}, $\tilde{\Pc}$ is unimodularly equivalent to a lattice pyramid of a unit square.
	In particular, four vertices $\tilde{\vb}_1,\ldots,\tilde{\vb}_4$ of $\tilde{\Pc}$ satisfy the relation $\tilde{\vb}_1+\tilde{\vb}_2=\tilde{\vb}_3+\tilde{\vb}_4$.
	Therefore, $\Pc$ can be determined as $\Pc=\conv(S \cup \{\ub\})$, for an empty simplex $S$ of normalized volume $2$, and a lattice point $u=(u_1,\ldots,u_d)$ satisfying $\ub=\vb_1+\vb_2-\vb_3$, for some $\vb_i$ chosen among the vertices of $S$.
By Theorem \ref{simplex}, we can assume that 
\[S=\conv({\bf 0}, \eb_1,\ldots,\eb_{d-1}, \eb_{a+1}+\cdots+\eb_{d-1}+2\eb_d)
\]
for some integer $a \geq 0$.
If $a \geq 4$, then for some $1 \leq i \leq a$, 
$u_i=0$. This implies that $\Pc$ is a lattice pyramid.
Hence one has $a \leq 3$.
\subsection{The case $a=3$}
If $d$ is odd, then $S$ is unimodularly equivalent to 
\[
\conv({\bf 0}, \eb_1,\ldots,\eb_{d-1}, \eb_{a+2}+\cdots+\eb_{d-1}+2\eb_d).
\]
Hence we can assume that $d$ is even.
Since $\Pc$ is not a lattice pyramid, we obtain $\{\vb_1,\vb_2,\vb_3 \} = \{\eb_1,\eb_2,\eb_3\}$.
Therefore, we may suppose that $\ub=(1,1,-1,0,\ldots,0)$.
Thus $\Pc=\Ac^{(4)}_3$.
\subsection{The case $a=2$}
Similarly, we can assume that $d$ is odd.
Moreover, since $\Pc$ is not a lattice pyramid,
we obtain $\{\vb_1,\vb_2,\vb_3 \} \supset \{\eb_1,\eb_2\}$.
Hence we may suppose that $\ub=\eb_1+\eb_2-\vb$ or $\eb_1-\eb_2+\vb$, where $\vb \in \{{\bf 0}, \eb_3,\eb_3+\cdots+\eb_{d-1}+2\eb_d \}$.
For any case, $\Pc$ is unimodularly equivalent to $\Ac_{2}^{(4)}$.
\subsection{The case $a=1$}
Similarly, we can assume that $d$ is even.
Moreover, since $\Pc$ is not a lattice pyramid,
we obtain $\eb_1 \in \{\vb_1,\vb_2,\vb_3 \}$.
Hence we may suppose that $\ub=\eb_1+\ub_1-\ub_2$ or $-\eb_1+\ub_1+\ub_2$, where $\ub_1,\ub_2 \in \{{\bf 0}, \eb_2,\eb_3,\eb_2+\cdots+\eb_{d-1}+2\eb_d \}$.
When $\ub=\eb_1+\ub_1-\ub_2$, $\Pc$ is unimodularly equivalent to $\Bc^{(4)}$.
When $\ub=-\eb_1+\ub_1+\ub_2$, $\Pc$ is unimodularly equivalent to $\Ac^{(4)}_1$.
\subsection{The case $a=0$}
Similarly, we can assume that $d$ is odd.
Moreover, we may suppose that $\eb_1 \in \{\vb_1,\vb_2,\vb_3\}$.
When $\vb_1=\eb_1$, we should consider $(\vb_2,\vb_3)=(\eb_2,\eb_3)$ or $\{\vb_2,\vb_3\} \subset \{{\bf 0}, \eb_1,\eb_1+\cdots+\eb_{d-1}+2\eb_d\}$.
For each case, $\Pc$ is unimodularly equivalent to $\Ac^{(4)}_2$.
When $\vb_3=\eb_1$, we should consider only the case $(\vb_1,\vb_2)=({\bf 0}, \eb_1+\cdots+\eb_{d-1}+2\eb_d)$.
In this case, $\Pc$ is also unimodularly equivalent to $\Ac^{(4)}_2$.

Therefore, we complete the proof of Theorem \ref{nonspan}.
	

\begin{thebibliography}{99}
		\bibitem{int2}
		G.~Balletti and A.~M. Kasprzyk,
		\newblock Three-dimensional lattice polytopes with two interior lattice points,
		\newblock {arXiv:1612.08918}.
		
		\bibitem{Bat}
		V. V. Batyrev,
		Lattice polytopes with a given $h^*$-polynomial, Algebraic and geometric combinatorics, Contemp. Math., vol. 423, Amer. Math. Soc.,
		Providence, RI, 2006, 1--10.
		
		\bibitem{BH1}
		V.V. Batyrev and J. Hofscheier,
		A generalization of a theorem of G. K. White,
		arXiv:1004.3411.
		
		\bibitem{BH2}
		V.V. Batyrev and J. Hofscheier,
		Lattice polytopes, finite abelian subgroups in $\textnormal{SL}(n, \CC)$ and
		coding theory, arXiv:1309.5312.
		
		\bibitem{BJ}
		V.~Batyrev and D.~Juny,
		Classification of Gorenstein toric del Pezzo varieties in arbitrary dimension, 
		\textit{Mosc. Math. J.}, \textbf{10}(2010), 285--316.
		
		\bibitem{BN}
		V.~Batyrev and B.~Nill,
		Multiples of lattice polytopes without interior lattice points,
		\textit{Mosc. Math. J.}, \textbf{7}(2007), 195--207.
		
		\bibitem{BR15}
		M. Beck and S. Robins,
		``Computing the continuous discretely",
		Undergraduate Texts in Mathematics, Springer, second
		edition, 2015.
		
			\bibitem{Santos1}
		M.~Blanco and F.~Santos,
		\newblock Lattice $3$-polytopes with few lattice points.
		\newblock {\em SIAM J. Discrete Math.}, \textbf{30}(2016), 669--686.
		
		\bibitem{Santos2}
		M.~Blanco and F.~Santos,
		\newblock Lattice $3$-polytopes with $6$ lattice points,
		\newblock {\em SIAM J. Discrete Math.}, \textbf{30}(2016), 687--717.
		
		\bibitem{Santos3}
		M.~Blanco and F.~Santos,
		\newblock Enumeration of Lattice $3$-Polytopes by Their Number of Lattice Points,
		\newblock {\em Discrete Comput. Geom.}, \textbf{60}(2018), 756--800.
		

		\bibitem{Ehrhart}
		E. Ehrhart, 
		``Polynom\^{e}s Arithm\'{e}tiques et M\'{e}thode des Poly\'{e}dres en Combinatorie", 
		Birkh\"{a}user, Boston/Basel/Stuttgart, 1977. 
		
		\bibitem{4class}
		A. Esterov and G. Gusev,
		Multivariate Abel-Ruffini,
		\textit{Math. Ann.}, \textbf{365}(2016), 1091--1110.
		
		\bibitem{class}
		G. Ewald,
		On the classification of toric Fano varieties,
		{\em Disc. Comput. Geom.} {\bf 3} (1988), 49--54.
		
		
		
		\bibitem{HibiRedBook}
		T.~Hibi, ``Algebraic Combinatorics on Convex Polytopes,''
		Carslaw Publications, Glebe NSW, Australia, 1992.
		
				\bibitem{Hibi_ineq}
		T.~Hibi, 
		A lower bound theorem for Ehrhart polynomials of convex polytopes,
		\textit{Adv. Math.}, \textbf{105}(1994), 162--165.
		
		
		\bibitem{HHL}
		T. Hibi, A. Higashitani and N. Li,
		Hermite normal forms of $\delta$-vectors,
		\textit{J. Combin. Theory Ser. A} \textbf{119}(2012), 1158--1173.
		
		\bibitem{HHN}
		T. Hibi, A. Higashitani and Y. Nagazawa,
		Ehrhart polynomials of convex polytopes with small volumes,
		\textit{Euro. J. Combin.} \textbf{32}(2011), 226--232.
		
		\bibitem{Hig_prime}
		A. Higashitani,
		Ehrhart polynomials of integral simplices with prime volumes,
		\textit{INTEGERS}, \textbf{14}(2014), 1--15.
		
		\bibitem{HNT}
		A. Higashitani, B. Nill and A. Tsuchiya,
		Gorenstein polytopes with trinomial $h^{*}$-polynomials,
		arXiv:1503.05685.
		
		\bibitem{HKN}
		J.~Hofscheier, L.~Katth\"{a}n and B.~Nill,
		Ehrhart Theory of Spanning Lattice Polytopes,
		\textit{International Mathematics Research Notices}, {\bf 2018} (2018), 5947--5973.
		
		\bibitem{int1}
		A.~M. Kasprzyk,
		\newblock Canonical toric {F}ano threefolds,
		\newblock {\em Canad. J. Math.}, \textbf{62}(2010),1293--1309.
		
		\bibitem{Nill}
		B.~Nill,
		Lattice polytopes having $h^*$-polynomials with given degree and linear coefficient,
		\textit{European J. Combin.}, \textbf{29}, 1596--1602, 2008.
		
				\bibitem{Stanleynonnegative}
		R.~P. Stanley,
		\newblock Decompositions of rational convex polytopes,
		\newblock {\em Annals of Discrete Math.} \textbf{6}(1980), 333--342.
		
		
		\bibitem{StanleyEC1}
		R.~P. Stanley.
		\newblock {\em Enumerative combinatorics. {V}olume 1}, volume~49 of {\em
			Cambridge Studies in Advanced Mathematics}.
		\newblock Cambridge University Press, Cambridge, second edition, 2012.
		
		
		
		
		\bibitem{delta5}
		A. Tsuchiya,
		Ehrhart polynomials of lattice polytopes with normalized volumes $5$,
		\textit{J. Comb.}, {\bf 10} (2019), 283--290.
		
		\bibitem{variety}
		V. E. Voskresenskii and A. A. Klyachko,
		Toroidal Fano varieties and root system,
		{\em Math. USSR Izvestiya} {\bf 24}(1985), 221--244.
		
	\end{thebibliography}
\end{document}